\topmargin -1.5cm \textwidth 6in \textheight 8.5in
\documentclass[10pt]{amsart}
\usepackage{amsfonts}
\usepackage{amsfonts}
\usepackage{amsfonts}
\usepackage{amsfonts}
\theoremstyle{plain}
\newtheorem{Thm}{Theorem}

\newtheorem{Cor}[Thm]{Corollary}
\newtheorem{Prop}[Thm]{Proposition}
\newtheorem{Lem}[Thm]{Lemma}

\newtheorem{Def}[Thm]{Definition}
\newtheorem{Rk}[Thm]{Remark}

\errorcontextlines=0

\begin{document}
\large

\title[uniqueness of ground states of NLS]
{Uniqueness of ground states of some coupled nonlinear Schrodinger
systems and their application}

\author{ Li Ma and Lin Zhao}

\address{Li Ma, Department of Mathematical Sciences, Tsinghua University,
 Peking 100084, P. R. China}

\email{lma@math.tsinghua.edu.cn}

\thanks{The research is partially supported by the National Natural Science
Foundation of China 10631020 and SRFDP 20060003002}

\begin{abstract}

We establish the uniqueness of ground states of some coupled
nonlinear Schrodinger systems in the whole space. We firstly use
Schwartz symmetrization to obtain the existence of ground states for
a more general case. To prove the uniqueness of ground states, we
use the radial symmetry of the ground states to transform the
systems into an ordinary differential system, and then we use the
integral forms of the system. More interestingly, as an application
of our uniqueness results, we derive a sharp
vector-valued Gagliardo-Nirenberg inequality.\\

{\bf Keywords: Schrodinger system, uniqueness of ground states,
sharp
vector-valued Gagliardo-Nirenberg inequality}\\

{\bf AMS Classification: Primary 35J.}

\end{abstract}

\maketitle
\date{7-16-2007}

\section{Introduction}

In this paper we are concerned with the uniqueness of ground states
of the coupled nonlinear Schr\"{o}dinger system:
\begin{align}\label{system}
-i\partial_t\phi_j=\Delta\phi_j+\mu_j|\phi_j|^{2p}\phi_j+\sum_{i\neq
j}\beta_{ij}|\phi_i|^{p+1}|\phi_j|^{p-1}\phi_j,
\end{align}
where $\phi_j=\phi_j(t,x)\in \mathbb{C}$, $x\in \mathbb{R}^n$,
$t>0$, $j=1,...,N$. Here $0<p<2/(n-2)^+$ (we use the convention:
$2/(n-2)^+=+\infty$ when $n=1,2$, and $(n-2)^+=n-2$ when $n\geq 3$),
$\mu_j$'s and $\beta_{ij}$'s are coupling constants subjected to
$\beta_{ij}=\beta_{ji}$.

The model (\ref{system}) has applications in many physical problems,
especially in nonlinear optics. An application of (\ref{system})
comes from \cite{AA}, the solution $\phi_j$ denotes the $j^{th}$
component of the beam in Kerr-like photo-refractive media. The
constant $\mu_j$ is for self-focusing in the $j^{th}$ component of
the beam. The coupling constant $\beta_{ij}$ is the interaction
between the $i^{th}$ and the $j^{th}$ component of the beam. We
refer to \cite{BSSSC} for more precision on the meaning of the
constants. Another application of (\ref{system}) arises in
\cite{Hioe}. When two optical waves of different frequencies
co-propagate in a medium and interact nonlinearly through the
medium, or when two polarization components of a wave interact
nonlinearly at some central frequency, the propagation equations for
the two problems can be considered together as the following $N$
coupled nonlinear Schr\"{o}dinger-like equations for the case $N=2$:
\begin{align*}
i\partial_t\phi_j+\partial_{x}^2\phi_j+\kappa_j\phi_j+
\left(\sum_{i=1}^Np_{ij}|\phi_i|^2\right)\phi_j
+\left(\sum_{i=1}^Nq_{ij}\phi_i^2\right)\bar{\phi_j}=0,
\end{align*}
where $j=1,...,N$, $\phi_j$ denotes the complex amplitude of the
$j^{th}$ electric field envelope, or the $j^{th}$ polarization
component, $p_{ij}$'s , $q_{ij}$'s and $\kappa_j$'s are parameters
characteristic of the medium and interaction. Especially, when
$\kappa_j=0$ and $q_{ij}=0$, it reduces to our model problem
(\ref{system}) where $n=1$ and $p=1$.

To obtain solitary solutions of the system (\ref{system}), we set $
\phi_j(t,x)=e^{it}u_j(x)$ ($u_j\in \mathbb{R}$) and transform the
system (\ref{system}) to steady-state $N$ coupled nonlinear
Schr\"{o}dinger equations given by
\begin{align}
u_j-\Delta u_j=\mu_j |u_j|^{2p}u_j+\sum_{i\neq
j}\beta_{ij}|u_i|^{p+1}|u_j|^{p-1}u_j,\ \ \
j=1,...,N.\label{soliton}
\end{align}
The concept of incoherent solitary solutions have attracted
considerable attentions in the last ten years, both from
experimental and theoretical point of view. The two experimental
studies \cite{MCSS} and \cite{MS} demonstrated the existence of
solitary waves made from both spatially and temporally incoherent
light. These papers were followed by a large amount of theoretical
work on incoherent solitary waves, see for example
\cite{BSSSC,Hioe,KPSV} and the references therein. The energy
functional of (\ref{soliton}) is
\begin{align*}
\mathcal{E}({\bf
u}):=&\frac{1}{2}\sum_{i=1}^N\int_{\mathbb{R}^n}\left(|\nabla
u_i|^2+u_i^2\right)-\frac{1}{2p+2}\sum_{i=1}^N\int_{\mathbb{R}^n}
\mu_iu_i^{2p+2}\\
&-\frac{1}{2p+2}\sum_{i,j=1}^N\int_{\mathbb{R}^n}\beta_{ij}|u_i|^{p+1}|u_j|^{p+1}.
\end{align*}
This functional is well defined if $u_i\in H^1(\mathbb{R}^n)$, by
virtue of the embedding $H^1(\mathbb{R}^n)\hookrightarrow
L^{2p+2}(\mathbb{R}^n)$ with $0<p<2/(n-2)^{+}$. We will always
consider solitary waves with finite energy, and will be particularly
interested in the least energy nontrivial solutions of
(\ref{soliton}), which are named ground states in Physics.

Let's recall some previous work about the ground states of
(\ref{soliton}) related to our research in this paper. In order to
simplify the presentation, we shall concentrate on the system of two
equations:
\begin{align}
\label{model}\left\{\begin{array}{ll} u_1-\Delta u_1=
\mu_1|u_1|^{2p}u_1+\beta|u_2|^{p+1}|u_1|^{p-1}u_1,\\
u_2-\Delta u_2=\mu_2|u_2|^{2p}u_2+\beta|u_1|^{p+1}|u_2|^{p-1}u_2.
\end{array}
\right.
\end{align}
A solution ${\bf u}=(u_1,u_2)$ of (\ref{model}) is called nontrivial
if $u_1\not\equiv 0$ and $u_2\not\equiv 0$ simultaneously. The
nontrivial weak solutions of (\ref{model}) are equivalent to the
nontrivial critical points of the energy functional
\begin{align*}
\mathcal{E}({\bf u})=&\frac{1}{2}\int_{\mathbb{R}^n}\left(|\nabla
u_1|^2+u_1^2+|\nabla
u_2|^2+u_2^2\right)\\
&-\frac{1}{2p+2}\int_{\mathbb{R}^n}\left(\mu_1u_1^{2p+2}+
2\beta|u_1|^{p+1}|u_2|^{p+1}+\mu_2u_2^{2p+2}\right)
\end{align*}
in the Sobolev space $H:=H^1(\mathbb{R}^n)\times H^1(\mathbb{R}^n)$.
Notice that any nontrivial solution of (\ref{model}) has to belong
to the Nehari manifold
\begin{align*}
\mathcal{N}:=\{&{\bf u}\in H, u_1\not\equiv0, u_2\not\equiv0;\ \ \
\int_{\mathbb{R}^n}\left(|\nabla u_1|^2+u_1^2+|\nabla
u_2|^2+u_2^2\right)\\
&=\int_{\mathbb{R}^n}\left(\mu_1u_1^{2p+2}+
2\beta|u_1|^{p+1}|u_2|^{p+1}+\mu_2u_2^{2p+2}\right)\}.
\end{align*}

\begin{Def}
The nonnegative minima of the minimization problem
\begin{align}
c:=\inf_{{\bf u}\in\mathcal{N}}\mathcal{E}({\bf u})\label{minimal}
\end{align}
is called the ground state of (\ref{model}).
\end{Def}

In the case of a single nonlinear Schr\"{o}dinger equation, the
ground state exists \cite{BL} and was proved to be the positive
solution of
\begin{align}
\Delta u-u+u^{2p+1}=0.\label{single}
\end{align}
The positive solution of (\ref{single}) is radial symmetric about
some fixed point \cite{GNN} and is unique in the sense of moduling
translations \cite{Kwong}. We denote it by $\omega$ hereafter.

Quite differently from the case of a single equation, the existence
of ground states solutions with multi-components of the system
(\ref{soliton}) is much more complicated than the single case and
was studied quite well when $\mu_j>0$ in the series of the papers
\cite{AC,LW,MMP,Sirakov}. Roughly speaking, they proved that there
always exist ranges of positive parameters $u_j$'s, $\beta_{ij}$'s
in (\ref{soliton}), for which this system has a least energy
solution, and ranges of positive parameters for which the energy
functional can't be minimized on the Nehari manifold where the
eventual solutions lie. Readers can consult these papers for further
details.

However, the uniqueness of positive solutions of the system
(\ref{soliton}) is a widely open problem, and to our knowledge no
results have been already known in this direction. In our present
paper, we discuss the ground state of (\ref{soliton}) in the case
$\mu_j\leq0$, which has not been considered before, and we will
prove that in this case the ground state is unique. The uniqueness
of ground states when $\mu_j>0$ remains open. To be precise, our
result reads as follows.

\begin{Thm}\label{main}
Consider the steady-state two coupled nonlinear Schr\"{o}dinger
equations in $\mathbb{R}^n$
\begin{align}
\label{thm}\left\{\begin{array}{ll} u_1-\Delta u_1=
\mu_1|u_1|^{2p}u_1+\beta_1|u_2|^{p+1}|u_1|^{p-1}u_1,\\
u_2-\Delta u_2=\mu_2|u_2|^{2p}u_2+\beta_2|u_1|^{p+1}|u_2|^{p-1}u_2,
\end{array}
\right.
\end{align}
in which $0<p<2/(n-2)^+$. Assume that
\begin{align}
\mu_1,\mu_2\leq0,\ \ \ \beta_1,\beta_2>0, \  \  \
\mu_1\beta_1^p=\mu_2\beta_2^p,\label{assumption1}
\end{align}
and
\begin{align}
\mu_1+\frac{\beta_2^{(p+1)/2}}{\beta_1^{(p-1)/2}}>0 \ \ \
\textrm{or} \ \ \
\mu_2+\frac{\beta_1^{(p+1)/2}}{\beta_2^{(p-1)/2}}>0.\label{assumption2}
\end{align}
Then the ground state of (\ref{thm}) exists and is unique up to
translations. Moreover, the ground state can be determined
explicitly by
\begin{align}
\label{solution}\left\{\begin{array}{ll}
u_1=\left(\mu_1+\frac{\beta_2^{(p+1)/2}}{\beta_1^{(p-1)/2}}\right)^{-1/2p}\omega,\\
u_2=\left(\mu_2+\frac{\beta_1^{(p+1)/2}}{\beta_2^{(p-1)/2}}\right)^{-1/2p}\omega,
\end{array}
\right.
\end{align}
where $\omega$ is the unique positive solution of (\ref{single}).
\end{Thm}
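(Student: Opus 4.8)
The plan is to reduce the system \eqref{thm} to a single scalar equation by guessing that the ground state has both components proportional to the scalar ground state $\omega$. First I would try the ansatz $u_1 = a\omega$, $u_2 = b\omega$ with $a,b>0$ constants to be determined. Plugging this into \eqref{thm} and using $-\Delta\omega = -\omega + \omega^{2p+1}$, each equation collapses to an algebraic relation: the first gives $a = \mu_1 a^{2p+1} + \beta_1 b^{p+1} a^{p-1}\cdot a$, i.e. $1 = \mu_1 a^{2p} + \beta_1 b^{p+1}a^{p-1}$, and similarly $1 = \mu_2 b^{2p} + \beta_2 a^{p+1}b^{p-1}$. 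I would then check that $a = (\mu_1 + \beta_2^{(p+1)/2}\beta_1^{-(p-1)/2})^{-1/2p}$ and $b = (\mu_2 + \beta_1^{(p+1)/2}\beta_2^{-(p-1)/2})^{-1/2p}$ solve this algebraic system; here the hypothesis $\mu_1\beta_1^p = \mu_2\beta_2^p$ in \eqref{assumption1} is exactly what makes the two equations compatible, and \eqref{assumption2} together with $\mu_j\le 0$ guarantees both radicands are positive (one directly, the other via the balance condition), so $a,b$ are well defined and positive. This produces a genuine positive solution, and by \cite{GNN}, \cite{Kwong} its components inherit the radial symmetry, decay, and uniqueness properties of $\omega$.

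Next I would establish existence of \emph{a} ground state. Since $\mu_j\le 0$ the ``bad'' self-focusing terms have a favorable sign, so the energy functional is better behaved than in the $\mu_j>0$ case; I would invoke the existence result obtained earlier in the paper via Schwartz symmetrization (the abstract advertises this), which yields a nonnegative minimizer $\mathbf u = (u_1,u_2)$ of \eqref{minimal} that is radially symmetric and decreasing about a common point. By the strong maximum principle applied to each equation (each $u_j$ satisfies $-\Delta u_j + u_j = (\text{nonnegative})\, u_j$... more precisely one must rule out a component vanishing identically, which is built into the definition of $\mathcal N$), both components are strictly positive.

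The heart of the uniqueness argument is then to show that any positive radial solution of \eqref{thm} must be of the proportional form $u_1 = a\omega(\cdot - x_0)$, $u_2 = b\omega(\cdot - x_0)$. After translating so the common center is the origin, write the system as the radial ODE system and pass to integral (Emden–Fowler / convolution with the Bessel-type Green's function $G$ of $-\Delta + 1$) form, as the abstract indicates. The key observation is that the ratio $v := u_1/u_2$ should be forced to be constant: using $\mu_1\beta_1^p = \mu_2\beta_2^p$, one shows that $w := \beta_1^{1/2p} u_1$ and $\tilde w := \beta_2^{1/2p} u_2$... I would look for the right linear combination so that $u_1$ and a scalar multiple of $u_2$ satisfy the \emph{same} scalar equation $-\Delta u + u = c\, u^{2p+1}$ after absorbing constants, then appeal to Kwong's uniqueness \cite{Kwong} to conclude they are equal up to the constant, hence proportional. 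Concretely, set $\theta = u_1/u_2$; subtracting suitable multiples of the two integral equations and using the assumed relation among the $\mu_j,\beta_j$ should yield an integral inequality forcing $\theta$ to be constant (a maximum-principle or moving-plane-free comparison argument on the radial variable). Once proportionality is known, the algebraic computation from the first paragraph pins down the constants uniquely, and \eqref{solution} follows; comparing energies shows this solution is indeed the ground state.

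I expect the main obstacle to be precisely the rigidity step — proving that \emph{every} positive (radial, finite-energy) solution has proportional components. The algebraic identity $\mu_1\beta_1^p=\mu_2\beta_2^p$ must be leveraged in just the right way in the integral formulation; a naive subtraction of the two integral equations leaves cross terms $|u_1|^{p+1}|u_2|^{p-1}u_2$ versus $|u_2|^{p+1}|u_1|^{p-1}u_1$ that do not obviously cancel, and controlling their difference in terms of $u_1 - \lambda u_2$ for the correct $\lambda$ is the delicate point. A secondary technical nuisance is justifying the integral (Green's function) representation and the decay of solutions — standard for the scalar equation but requiring care for the coupled system, especially when $1<2p+1$ forces attention to regularity of the nonlinearity at points where a component vanishes (which strict positivity, once established, removes).
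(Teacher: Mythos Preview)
Your overall architecture matches the paper's --- existence via Schwartz symmetrization, then uniqueness of positive radial solutions via an integral-form comparison --- but you have not actually closed the rigidity step, and the paper's route to it is cleaner than the one you sketch.

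The insight you are missing is that the algebraic condition $\mu_1\beta_1^p=\mu_2\beta_2^p$ is used \emph{before} the comparison, not inside it: it is exactly what permits a rescaling $w_1=u_1$, $w_2=(\beta_1/\beta_2)^{1/2}u_2$ that reduces \eqref{thm} to the \emph{symmetric} system with $\mu_1=\mu_2=:\mu\le 0$ and $\beta_1=\beta_2=:\beta$, $\mu+\beta>0$ (this is the content of Remark~\ref{Rk}). Once in the symmetric case there is no mysterious $\lambda$ to hunt for; one compares $u_1$ and $u_2$ directly.

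The comparison itself is then elementary and needs no Bessel Green's function. Integrate the radial ODE twice from $0$ to $r$ to write each $u_j(r)$ as $u_j(0)$ plus three iterated integrals, and subtract. If $u_1(0)>u_2(0)$, then on any interval where $u_1>u_2$ all three differenced integrands have a favorable sign: $u_1-u_2>0$; $-\mu\bigl(u_1^{2p+1}-u_2^{2p+1}\bigr)\ge 0$ because $\mu\le 0$; and the cross term factors as
\[
-\beta\bigl(u_1^{p}u_2^{p+1}-u_2^{p}u_1^{p+1}\bigr)=\beta\,u_1^{p}u_2^{p}\,(u_1-u_2)>0.
\]
Hence $u_1>u_2$ persists for all $r$, and the once-integrated equation gives $(u_1-u_2)'\ge 0$, so $u_1-u_2$ is nondecreasing and bounded below by $u_1(0)-u_2(0)>0$, contradicting decay at infinity. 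Therefore $u_1(0)=u_2(0)$; since also $u_1'(0)=u_2'(0)=0$, ODE uniqueness forces $u_1\equiv u_2$, both solve $\Delta u-u+(\mu+\beta)u^{2p+1}=0$, and Kwong's theorem finishes.

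Thus the ``delicate point'' you flagged --- controlling the cross terms in terms of $u_1-\lambda u_2$ --- evaporates after the rescaling: in the symmetric system the cross difference factors cleanly with the right sign, and $\mu\le 0$ is exactly what makes the self-interaction difference cooperate. Your Green's-function formulation could be made to work but is heavier than necessary; direct double integration of the radial ODE suffices.
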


As far as we know, there are only two results about the uniqueness
of positive solutions to stationary Schrodinger systems. One is in
\cite{LiMa}. The other one is in \cite{Ma}, where the radial
symmetry and uniqueness results have been obtained for the
non-negative solutions to the schrodinger system
$$
(I-\Delta)u=v^{p},\\
(I-\Delta)v=u^{q}.
$$
One may see \cite{ChenLi} for related uniqueness result.

 We now give
some remarks about the conditions (\ref{assumption1}) and
(\ref{assumption2}).

\begin{Rk}
The condition (\ref{assumption1}) implies that
$$
\left(\mu_2+\frac{\beta_1^{(p+1)/2}}{\beta_2^{(p-1)/2}}\right)=
\frac{\beta_1^p}{\beta_2^p}\left(\mu_1+\frac{\beta_2^{(p+1)/2}}{\beta_1^{(p-1)/2}}\right),
$$
and hence in (\ref{assumption2})
$$
\mu_1+\frac{\beta_2^{(p+1)/2}}{\beta_1^{(p-1)/2}}>0\Leftrightarrow
\mu_2+\frac{\beta_1^{(p+1)/2}}{\beta_2^{(p-1)/2}}>0.
$$
\end{Rk}

\begin{Rk}\label{Rk}
It's easy to check that the special case
\begin{align}
\mu_1=\mu_2=\mu\leq 0,\ \ \ \beta_1=\beta_2=\beta\ \ \ \textrm{and}
\ \ \ \mu+\beta>0\label{standard}
\end{align}
satisfies (\ref{assumption1}) and (\ref{assumption2}). Conversely,
any other constants satisfying (\ref{assumption1}) and
(\ref{assumption2}) can be transformed to the case (\ref{standard})
by scaling. In fact, set
$$
w_1(x):=a_1u_1(x),\ \ \ w_2(x):=a_2u_2(x),
$$
where $a_1,a_2>0$ are the scaling constants. Then the equations
(\ref{thm}) can be written as
\begin{align*}
\left\{\begin{array}{ll} w_1-\Delta w_1=
\frac{\mu_1}{a_1^{2p}}w_1^{2p+1}+\frac{\beta_1}{a_1^{p-1}a_2^{p+1}}w_1^pw_2^{p+1},\\
w_2-\Delta w_2=\frac{\mu_2}{a_2^{2p}}
w_2^{2p+1}+\frac{\beta_2}{a_2^{p-1}a_1^{p+1}} w_2^pw_1^{p+1}.
\end{array}
\right.
\end{align*}
The condition (\ref{assumption1}) guarantees the existence of
$a_1,a_2>0$ such that
$$
\frac{\mu_1}{a_1^{2p}}=\frac{\mu_2}{a_2^{2p}}:=\mu,\ \ \
\frac{\beta_1}{a_1^{p-1}a_2^{p+1}}=\frac{\beta_2}{a_2^{p-1}a_1^{p+1}}:=\beta.
$$
Indeed, we can choose without loss of generality that
$$
a_1=1,\ \ \ a_2=(\frac{\beta_1}{\beta_2})^{1/2}
$$
and thus
$$
\mu=\mu_1,\ \ \ \beta=\frac{\beta_2^{(p+1)/2}}{\beta_1^{(p-1)/2}}.
$$
The condition (\ref{assumption2}) is just that $\mu+\beta>0$.
\end{Rk}

\begin{Rk}
The relation $\mu+\beta>0$ in (\ref{standard}) plays a crucial role
to ensure that the Nehari manifold $\mathcal{N}\neq\emptyset$.
\end{Rk}

As is well known, the sharp Gagliardo-Nirenberg inequality plays
extremely important roles in the quantitative analysis of blow-up
solutions of the single Schr\"{o}dinger equation. A large amount of
work relies heavily on the sharp constant in the Gagliardo-Nirenberg
inequality. We shall only quote here
\cite{Merle1,Merle2,Tsutsumi,Weinstein1,Weinstein2} where a
comprehensive list of references on this subject can be found. As an
application of Theorem \ref{main}, we can obtain a sharp
vector-valued Gagliardo-Nirenberg inequality. To our experience, the
sharp vector-valued Gagliardo-Nirenberg inequality we obtain here
would play some non-negligible roles in further studies of
Schr\"{o}dinger systems.

\begin{Cor}\label{GN}
Let $0<p<2/(n-2)^+$ and $\mathcal{K}_{n,p}$ be the sharp constant in
the single valued Gagliardo-Nirenberg inequality, that is,
$$
\|u\|_{2p+2}^{2p+2}\leq \mathcal{K}_{n,p}\|u\|_2^{2p+2-np}\|\nabla
u\|_2^{np},\ \ \ \forall\ u\in H^1(\mathbb{R}^n).
$$
Assume the constants $\mu$, $\beta$ satisfy
$$
\mu\leq 0,\ \ \ \textrm{and}\ \ \ \mu+\beta>0.
$$
Then we have the two vector-valued Gagliardo-Nirenberg inequality
below. That is, $\forall\ u_1,u_2\in H^1(\mathbb{R}^n)$,
\begin{align}
&\quad\mu\|u_1\|_{2p+2}^{2p+2}+2\beta\|u_1u_2\|_{p+1}^{p+1}+\mu\|u_2\|_{2p+2}^{2p+2}\label{GNi}\\
&\leq
\mathcal{K}_{n,p,\mu,\beta}\left(\|u_1\|_2^2+\|u_2\|_2^2\right)^{p+1-np/2}
\left(\|\nabla u_1\|_2^2+\|\nabla u_2\|_2^2\right)^{np/2},\nonumber
\end{align}
in which the sharp constant $\mathcal{K}_{n,p,\mu,\beta}$ is
determined by
$$
\mathcal{K}_{n,p,\mu,\beta}=\frac{(\mu+\beta)}{2^p}\mathcal{K}_{n,p}.
$$
\end{Cor}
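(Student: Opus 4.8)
The plan is to write the sharp constant as the supremum of a Weinstein-type quotient and to evaluate that supremum through the Nehari and Pohozaev identities, using Theorem~\ref{main} to pin down the minimizing configuration. Throughout, the system in play is (\ref{thm}) in the standard case $\mu_1=\mu_2=\mu$, $\beta_1=\beta_2=\beta$, which satisfies (\ref{assumption1})--(\ref{assumption2}) by Remark~\ref{Rk}; write $c$ for its ground state energy and $c_0$ for that of the scalar equation (\ref{single}). Set $\alpha:=p+1-np/2>0$ and $\gamma:=np/2>0$, so $\alpha+\gamma=p+1$, and for ${\bf u}=(u_1,u_2)\in H$ put
\[
N({\bf u}):=\mu\|u_1\|_{2p+2}^{2p+2}+2\beta\|u_1u_2\|_{p+1}^{p+1}+\mu\|u_2\|_{2p+2}^{2p+2},\quad D({\bf u}):=\bigl(\|u_1\|_2^2+\|u_2\|_2^2\bigr)^{\alpha}\bigl(\|\nabla u_1\|_2^2+\|\nabla u_2\|_2^2\bigr)^{\gamma},
\]
and $J({\bf u}):=N({\bf u})/D({\bf u})$ when $N({\bf u})>0$. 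Then (\ref{GNi}) reads $N\le\mathcal{K}_{n,p,\mu,\beta}D$; since $D\ge0$ it is trivial when $N\le0$, so the claim amounts to $\sup J=\tfrac{\mu+\beta}{2^p}\mathcal{K}_{n,p}$. A direct check shows $J$ is invariant under the two-parameter rescaling $u_i(x)\mapsto a\,u_i(bx)$ and is not decreased when $(u_1,u_2)$ is replaced by $(|u_1|,|u_2|)$.

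The main step is a reduction. With $M:=\|u_1\|_2^2+\|u_2\|_2^2$ and $G:=\|\nabla u_1\|_2^2+\|\nabla u_2\|_2^2$, I would use the two scaling parameters to move any pair with $N>0$ onto $\mathcal{M}:=\mathcal{N}\cap\{G=\tfrac{\gamma}{\alpha}M\}$; the equations for $a,b$ are explicitly solvable precisely because $N>0$. On $\mathcal{N}$ one has $N=M+G$, so on $\mathcal{M}$ one computes $N=\tfrac{p+1}{\alpha}M$ and $D=(\gamma/\alpha)^{\gamma}M^{p+1}$, whence
\[
J=\frac{(p+1)\alpha^{\gamma-1}}{\gamma^{\gamma}}\,M^{-p}\qquad\text{on }\mathcal{M},
\]
a strictly decreasing function of $M$; hence $\mathcal{K}_{n,p,\mu,\beta}=\sup J=\tfrac{(p+1)\alpha^{\gamma-1}}{\gamma^{\gamma}}(\inf_{\mathcal{M}}M)^{-p}$. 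Also on $\mathcal{M}$, $\|{\bf u}\|_H^2:=\int_{\mathbb{R}^n}(|\nabla u_1|^2+u_1^2+|\nabla u_2|^2+u_2^2)=M+G=\tfrac{p+1}{\alpha}M$, while $\mathcal{E}=\tfrac{p}{2p+2}\|{\bf u}\|_H^2$ on $\mathcal{N}$. Since by Theorem~\ref{main} the infimum of $\mathcal{E}$ over $\mathcal{N}$ is attained, the minimizer solves (\ref{thm}) (the Nehari manifold being a natural constraint), hence satisfies the Pohozaev identity, hence lies in $\mathcal{M}$; therefore $\inf_{\mathcal{M}}M=\tfrac{\alpha}{p+1}\inf_{\mathcal{N}}\|{\bf u}\|_H^2=\tfrac{\alpha}{p+1}\cdot\tfrac{2p+2}{p}\,c=\tfrac{2\alpha}{p}\,c$, so $\mathcal{K}_{n,p,\mu,\beta}=\tfrac{(p+1)\alpha^{\gamma-1}}{\gamma^{\gamma}}\bigl(\tfrac{2\alpha}{p}c\bigr)^{-p}$.

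Running the same computation for the single equation (\ref{single}) gives $\mathcal{K}_{n,p}=\tfrac{(p+1)\alpha^{\gamma-1}}{\gamma^{\gamma}}\bigl(\tfrac{2\alpha}{p}c_0\bigr)^{-p}$, so $\mathcal{K}_{n,p,\mu,\beta}/\mathcal{K}_{n,p}=(c_0/c)^p$. It remains to evaluate $c$: by Theorem~\ref{main} the ground state of (\ref{thm}) in the standard case is $u_1=u_2=(\mu+\beta)^{-1/2p}\omega$, and inserting this into $\mathcal{E}$, using that $\omega$ solves (\ref{single}) (whence $\|\omega\|_{H^1}^2=\|\omega\|_{2p+2}^{2p+2}$), yields $c=2(\mu+\beta)^{-1/p}c_0$. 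Hence $(c_0/c)^p=\tfrac{\mu+\beta}{2^p}$ and $\mathcal{K}_{n,p,\mu,\beta}=\tfrac{\mu+\beta}{2^p}\mathcal{K}_{n,p}$, which is (\ref{GNi}) with its sharp constant. As a consistency check, evaluating $J$ directly at $(\omega,\omega)$ and using that $\omega$ extremizes the scalar Gagliardo-Nirenberg inequality gives exactly $\tfrac{\mu+\beta}{2^p}\mathcal{K}_{n,p}$, so this constant is attained.

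I expect the crux to be the reduction in the second paragraph: the Weinstein-type passage from $\sup J$ to $\inf_{\mathcal{M}}M$, and in particular the identity $\inf_{\mathcal{M}}M=\tfrac{\alpha}{p+1}\inf_{\mathcal{N}}\|{\bf u}\|_H^2$. This is exactly where one uses the existence half of Theorem~\ref{main} (so that $\inf_{\mathcal{N}}\mathcal{E}$ is realized by a genuine solution), the natural-constraint property of $\mathcal{N}$, and the validity of the Pohozaev identity for finite-energy solutions (from elliptic regularity and the exponential decay of $H^1$ solutions of these subcritical systems). The rest --- the scaling invariance, the evaluation of $c$, and the bookkeeping with $\alpha$ and $\gamma$ --- is routine.
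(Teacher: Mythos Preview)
Your argument is correct and follows the same Weinstein-type scheme as the paper: set up the ratio functional, use the Nehari and Pohozaev identities to reduce its extremal value to a single norm of the extremizer, and then invoke Theorem~\ref{main} to identify that extremizer explicitly as $u_1^*=u_2^*=(\mu+\beta)^{-1/2p}\omega$.

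The organizational difference is worth noting. The paper asserts, citing \cite{Weinstein1}, that the infimum of $\mathcal{J}=D/N$ is \emph{attained} by a positive solution of the system, and then computes $\mathcal{J}$ at that solution using the explicit formula for $\mathcal{K}_{n,p}$ from \cite{Weinstein1}. You instead use the two-parameter scaling invariance to push every competitor with $N>0$ onto $\mathcal{M}=\mathcal{N}\cap\{G=\tfrac{\gamma}{\alpha}M\}$, on which $J$ depends only on $M$, and then identify $\inf_{\mathcal{M}}M$ with $\tfrac{2\alpha}{p}c$ via the already-proved existence part of Theorem~\ref{main} (the Nehari minimizer solves the system, hence satisfies Pohozaev, hence lies in $\mathcal{M}$). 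This buys you self-containment: you never need a separate existence proof for the Weinstein extremizer, only Theorem~\ref{exi}. Taking the ratio $(c_0/c)^p$ with the scalar problem also spares you from quoting the closed-form expression $\mathcal{K}_{n,p}=\tfrac{2(p+1)}{(np)^{np/2}(2p+2-np)^{1-np/2}\|\omega\|_2^{2p}}$; the paper uses that formula explicitly in its last step. Both routes lead to $\mathcal{K}_{n,p,\mu,\beta}=\tfrac{\mu+\beta}{2^p}\mathcal{K}_{n,p}$, and your final consistency check at $(\omega,\omega)$ confirms sharpness.
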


\begin{Rk}
If one uses H{\"{o}}lder inequality directly, one can only get a
vector-valued Gagliardo-Nirenberg inequality like
\begin{align*}
&\quad
\mu\|u_1\|_{2p+2}^{2p+2}+2\beta\|u_1u_2\|_{p+1}^{p+1}+\mu\|u_2\|_{2p+2}^{2p+2}\\
&\leq(\mu+\beta)\left(\|u_1\|^{2p+2}_{2p+2}+\|u_2\|_{2p+2}^{2p+2}\right)\\
&\leq(\mu+\beta)\left(\|u_1\|^2_{2p+2}+\|u_2\|_{2p+2}^2\right)^{p+1}\\
&\leq(\mu+\beta)\mathcal{K}_{n,p}\left(\sum_{j=1}^2(\|u_j\|_2^2)^{\frac{p+1-np/2}{p+1}}(\|\nabla
u_j\|_2^2)^{\frac{np/2}{p+1}}\right)^{p+1}\\
&\leq(\mu+\beta)\mathcal{K}_{n,p}\left((\|u_1\|_2^2+\|u_2\|_2^2)^{\frac{p+1-np/2}{p+1}}
(\|\nabla u_1\|_2^2+\|\nabla
u_2\|_2^2)^{\frac{np/2}{p+1}}\right)^{p+1}\\
&=(\mu+\beta)\mathcal{K}_{n,p}\left(\|u_1\|_2^2+\|u_2\|_2^2\right)^{p+1-np/2}
\left(\|\nabla u_1\|_2^2+\|\nabla u_2\|_2^2\right)^{np/2},
\end{align*}
in which the constant $(\mu+\beta)\mathcal{K}_{n,p}$ is in strong
contrast with the sharp constant $(\mu+\beta)\mathcal{K}_{n,p}/2^p$.
In fact, the sharp constant relies heavily on the explicit
expressions of ground states.
\end{Rk}

\begin{Rk}
Somewhat surprisingly, our arguments to prove Theorem \ref{main} and
corollary \ref{GN} can't be generalized to the $N$ coupled
Schr\"{o}dinger system with $N\geq 3$. So we have to leave the case
$N\geq 3$ as an open problem. For the scalar Gagliardo-Nirenberg
inequality in its general form, one may see E.Hebey's book
\cite{He}.
\end{Rk}

To prove Theorem \ref{main}, we only deal with the standard case
(\ref{standard}) of (\ref{thm}), as what we have explained in Remark
\ref{Rk}. In section 2, we use Schwartz symmetrization to prove that
the minimization problem (\ref{minimal}) can be achieved by a
positive solution of the system (\ref{thm}), which indicates the
existence of ground states in a more general case. In section 3, we
transform (\ref{thm}) to a system of ordinary differential equations
(ODE) by virtue of the radial symmetry of positive solutions of
(\ref{thm}). Then by the comparison technique of ODE, we arrive at
the uniqueness of positive solutions of (\ref{thm}). Since all the
ground sates must be positive solutions of (\ref{thm}), we conclude
that the ground state is unique. In section 4, we prove the sharp
vector-valued Gagliardo-Nirenberg inequality (Corollary \ref{GN}) in
detail.

\section{Existence of ground states}

This section is devoted to the proof of the existence of ground
states of (\ref{thm}) in the case
\begin{align}\mu_1,\mu_2\leq 0\ \
\textrm{and}\ \ \ \mu_1+\beta>0,\ \ \ \mu_2+\beta>0.\label{sect1}
\end{align}
The existence of ground states of (\ref{thm}) when $\mu_1,\mu_2>0$
has been extensively studied in the papers \cite{AC,LW,MMP,Sirakov}
using the method of Schwartz symmetrization. We declare that this
symmetrization method still works for the case (\ref{sect1}) under
our consideration. Since our proof would have many details different
from the ones in the preceding papers, we will give our proof
thoroughly for the purpose of completeness. We point out that our
proof, which combines the analysis in \cite{AC} and \cite{LW}, could
be seen as a simplified version of their arguments.

We have the following proposition, which asserts that all the
critical points of the minimization problem (\ref{minimal}) must be
weak solutions of (\ref{thm}) in $H$.

\begin{Prop}\label{prop}
If the minimization problem (\ref{minimal}) is attained by a coupled
${\bf u}\in\mathcal{N}$,  then ${\bf u}$ is a solution of
(\ref{thm}).
\end{Prop}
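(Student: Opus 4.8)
The plan is to use the standard Lagrange multiplier rule for the constrained minimization problem \eqref{minimal} and then show that the multiplier is zero, so that the minimizer solves \eqref{thm} exactly rather than up to a scalar factor. Since any nontrivial solution of \eqref{thm} lies on the Nehari manifold $\mathcal{N}$, and $\mathcal{N}$ is defined as the zero set of the $C^1$ functional
$$
G({\bf u}):=\int_{\mathbb{R}^n}\left(|\nabla u_1|^2+u_1^2+|\nabla u_2|^2+u_2^2\right)-\int_{\mathbb{R}^n}\left(\mu_1 u_1^{2p+2}+2\beta|u_1|^{p+1}|u_2|^{p+1}+\mu_2 u_2^{2p+2}\right),
$$
I would first check that $\mathcal{N}$ is a genuine $C^1$ manifold near the minimizer ${\bf u}$, i.e.\ that $G'({\bf u})\neq 0$. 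A direct computation gives
$$
\langle G'({\bf u}),{\bf u}\rangle = 2\int_{\mathbb{R}^n}\left(|\nabla u_1|^2+u_1^2+|\nabla u_2|^2+u_2^2\right) - (2p+2)\int_{\mathbb{R}^n}\left(\mu_1 u_1^{2p+2}+2\beta|u_1|^{p+1}|u_2|^{p+1}+\mu_2 u_2^{2p+2}\right);
$$
on $\mathcal{N}$ the two integrals are equal, so this evaluates to $-2p\,\|{\bf u}\|_{H}^2<0$ because ${\bf u}\neq 0$ (both components are nontrivial on $\mathcal{N}$). Hence $G'({\bf u})\neq 0$ and the Lagrange multiplier theorem applies.

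Thus there is $\lambda\in\mathbb{R}$ with $\mathcal{E}'({\bf u})=\lambda\,G'({\bf u})$. To identify $\lambda$, I pair both sides with the test element ${\bf u}$ itself. Since ${\bf u}\in\mathcal{N}$ we have $\langle\mathcal{E}'({\bf u}),{\bf u}\rangle = G({\bf u}) = 0$ (this is precisely the statement that ${\bf u}$ is on the Nehari manifold, after noting $\langle\mathcal{E}'({\bf u}),{\bf u}\rangle$ equals $G({\bf u})$ up to the constant factor coming from the $\tfrac1{2p+2}$ in front of the nonlinear terms — in fact $\langle\mathcal{E}'({\bf u}),{\bf u}\rangle=G({\bf u})$ exactly with the normalization above). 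On the other hand $\langle G'({\bf u}),{\bf u}\rangle = -2p\,\|{\bf u}\|_H^2\neq 0$ as computed. Therefore $0=\lambda\cdot(-2p\|{\bf u}\|_H^2)$, which forces $\lambda=0$, and hence $\mathcal{E}'({\bf u})=0$ in $H^{-1}$; that is, ${\bf u}$ is a weak solution of \eqref{thm}.

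The only slightly delicate point, and the one I expect to be the main obstacle to write carefully, is the differentiability of the nonlinear terms in $\mathcal{E}$ and $G$ on $H=H^1(\mathbb{R}^n)\times H^1(\mathbb{R}^n)$: one must justify that the maps ${\bf u}\mapsto \int |u_i|^{2p+2}$ and ${\bf u}\mapsto \int |u_1|^{p+1}|u_2|^{p+1}$ are $C^1$, which uses the subcritical Sobolev embedding $H^1(\mathbb{R}^n)\hookrightarrow L^{2p+2}(\mathbb{R}^n)$ (valid since $0<p<2/(n-2)^+$), together with the Hölder inequality $\int|u_1|^{p+1}|u_2|^{p+1}\le \|u_1\|_{2p+2}^{p+1}\|u_2\|_{2p+2}^{p+1}$ to control the cross term; the exponent $p-1$ appearing in the derivative (e.g.\ $|u_1|^{p-1}u_1|u_2|^{p+1}$) is integrable against an $H^1$ test function by the same embedding and Hölder with exponents $(2p+2)/(2p+1)$ and $2p+2$. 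Once this routine functional-analytic bookkeeping is in place, the argument above is immediate. I would also remark that positivity of the minimizer (obtained from the symmetrization in the previous section) is not needed for this proposition — only that ${\bf u}\in\mathcal{N}$, so both components are nontrivial and $\|{\bf u}\|_H\neq 0$.
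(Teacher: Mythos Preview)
Your proof is correct and follows essentially the same approach as the paper: apply the Lagrange multiplier rule on the Nehari manifold, then test against ${\bf u}$ itself to conclude the multiplier vanishes. You are in fact slightly more careful than the paper, explicitly verifying $G'({\bf u})\neq 0$ so that $\mathcal{N}$ is a $C^1$ manifold near the minimizer, and flagging the routine differentiability of the nonlinear terms; the paper simply invokes ``standard minimization theory'' without these checks.
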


\begin{proof}
The proof of Proposition \ref{prop} is similar to the one in
\cite{AC}. Let
\begin{align*}
\mathcal{G}({\bf u}):=&\int_{\mathbb{R}^n}\left(|\nabla
u_1|^2+u_1^2+|\nabla
u_2|^2+u_2^2\right)\\
&-\int_{\mathbb{R}^n}\left(\mu_1 u_1^{2p+2}+
2\beta|u_1|^{p+1}|u_2|^{p+1}+\mu_2 u_2^{2p+2}\right).
\end{align*}
We have for each $\psi=(\psi_1,\psi_2)\in H$ that
\begin{align*}
<\nabla \mathcal{E}({\bf u}),\psi>
=&\sum_{i=1}^2\int_{\mathbb{R}^n}\left(\nabla
u_i\cdot\nabla\psi_i+u_i\psi_i-\mu_iu_i^{2p+1}\psi_i\right)\\
&-\sum_{i=1}^2\int_{\mathbb{R}^n}\beta|u_i|^{p-1}|u_j|^{p+1}u_i\psi_i,\
\ \ j\neq i,
\end{align*}
\begin{align*}
<\nabla \mathcal{G}({\bf u}),\psi>
=&2\sum_{i=1}^2\int_{\mathbb{R}^n}\left(\nabla
u_i\cdot\nabla\psi_i+u_i\psi_i-(p+1)\mu_iu_i^{2p+1}\psi_i\right)\\
&-2(p+1)\sum_{i=1}^2\int_{\mathbb{R}^n}\beta
|u_i|^{p-1}|u_j|^{p+1}u_i\psi_i.\ \ \ j\neq i.
\end{align*}
Suppose that ${\bf u}=(u_1,u_2)\in\mathcal{N}$ is a minimizer for
$\mathcal{E}$ restricted on $\mathcal{N}$, then the standard
minimization theory yields an Euler-Lagrange multiplier
$L\in\mathbb{R}$ such that
$$
\nabla\mathcal{E}({\bf u})+L\nabla\mathcal{G}({\bf u})=0.
$$
Setting $\mathcal{G}({\bf u})=<\nabla\mathcal{E}({\bf u}),{\bf
u}>=0$ in the expression $<\nabla\mathcal{E}({\bf
u})+L\nabla\mathcal{G}({\bf u}),{\bf u}>=0$, we obtain that
$$
L\int_{\mathbb{R}^n}\left(|\nabla u_1|^2+u_1^2+|\nabla
u_2|^2+u_2^2\right)=0,
$$
which implies that $L=0$, thanks to ${\bf u}\not\equiv 0$.
\end{proof}

Next, we use Schwartz symmetrization to prove that the minimum $c$
in (\ref{minimal}) can be achieved by a positive solution of
(\ref{thm}) as in \cite{LW}. The following lemma \cite{Lieb} is at
the heart of our argument.

\begin{Lem}\label{lem}
Let $u^*$ be the Schwartz symmetric function associated to $u$,
namely the radially symmetric, radially non-increasing function,
equi-measurable with $u$. There hold for $1\leq p<\infty$ that
$$
\int_{\mathbb{R}^n}|\nabla u^*|^2\leq\int_{\mathbb{R}^n}|\nabla
u|^2,\ \ \ \forall\ u\in H^1(\mathbb{R}^n),\ \ \ u\geq0;
$$
$$
\int_{\mathbb{R}^n}|u^*|^p=\int_{\mathbb{R}^n}|u|^p,\ \ \ \forall\
u\in L^p({R}^n), \ \ \ u\geq 0;
$$
$$
\int_{\mathbb{R}^n}(u^*)^{p}(v^*)^{p}\geq
\int_{\mathbb{R}^n}u^{p}v^{p},\ \ \ \forall\ u,v\in
L^{2p}(\mathbb{R}^n),\ \ \ u,v\geq 0.
$$
\end{Lem}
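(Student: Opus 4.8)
The three claims are of increasing depth, so I would handle them separately; since the lemma is classical (it is exactly the statement found in Lieb--Loss), the aim is to exhibit the mechanism rather than to chase every measure-theoretic point. The second identity is immediate from equimeasurability: for $u\ge 0$ the layer-cake (Cavalieri) formula gives $\int_{\mathbb{R}^n}|u|^p=\int_0^\infty p\,t^{p-1}\,|\{u>t\}|\,dt$ and the same for $u^*$, and by the very definition of $u^*$ one has $|\{u^*>t\}|=|\{u>t\}|$ for every $t>0$, so the two integrals agree; this uses nothing beyond measurability and $u\in L^p$.

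For the product inequality I would run the same idea in two parameters. Since $s\mapsto s^p$ is increasing on $[0,\infty)$, rearrangement commutes with it, i.e.\ $(u^*)^p=(u^p)^*$, so the superlevel sets $B(s):=\{(u^*)^p>s\}$ and $\widetilde B(t):=\{(v^*)^p>t\}$ are balls centered at the origin with $|B(s)|=|\{u^p>s\}|$ and $|\widetilde B(t)|=|\{v^p>t\}|$. Writing $u^p(x)=\int_0^\infty \chi_{\{u^p>s\}}(x)\,ds$ and likewise for $v^p$, Tonelli gives
$$\int_{\mathbb{R}^n}u^pv^p=\int_0^\infty\!\!\int_0^\infty\bigl|\{u^p>s\}\cap\{v^p>t\}\bigr|\,ds\,dt,$$
and the analogous identity for $(u^*)^p(v^*)^p$ with $B(s)\cap\widetilde B(t)$ in place of the intersection. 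As $B(s)$ and $\widetilde B(t)$ are concentric balls, one contains the other, whence $|B(s)\cap\widetilde B(t)|=\min(|\{u^p>s\}|,|\{v^p>t\}|)\ge|\{u^p>s\}\cap\{v^p>t\}|$ for every $(s,t)$; integrating in $(s,t)$ yields the inequality.

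The Pólya--Szegő inequality $\int|\nabla u^*|^2\le\int|\nabla u|^2$ is the heart of the matter, and I would prove it first for $u\in C_c^\infty(\mathbb{R}^n)$, $u\ge 0$, then pass to a general $u\in H^1$ with $u\ge 0$ by approximating in $H^1$ by smooth nonnegative $u_k$: the bound $\|\nabla u_k^*\|_2\le\|\nabla u_k\|_2$ keeps $\{u_k^*\}$ bounded in $H^1$, and combining the $L^2$-continuity of the symmetrization with weak lower semicontinuity of the Dirichlet integral transfers the inequality to the limit. For smooth $u$, let $\mu(t)=|\{u>t\}|$ be the distribution function, which is shared by $u$ and $u^*$. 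The coarea formula gives, for a.e.\ $t$, both $\int_{\mathbb{R}^n}|\nabla u|^2=\int_0^\infty\bigl(\int_{\{u=t\}}|\nabla u|\,d\mathcal{H}^{n-1}\bigr)dt$ and $-\mu'(t)=\int_{\{u=t\}}|\nabla u|^{-1}\,d\mathcal{H}^{n-1}$; Cauchy--Schwarz on $\{u=t\}$ then gives $\mathcal{H}^{n-1}(\{u=t\})^2\le\bigl(\int_{\{u=t\}}|\nabla u|\,d\mathcal{H}^{n-1}\bigr)(-\mu'(t))$. The isoperimetric inequality bounds $\mathcal{H}^{n-1}(\{u=t\})=\mathrm{Per}(\{u>t\})$ below by the perimeter of the centered ball of volume $\mu(t)$, which is exactly $\mathcal{H}^{n-1}(\{u^*=t\})$; and for the radial monotone $u^*$ the gradient is constant on each level sphere, so there Cauchy--Schwarz becomes an equality and $\int_{\{u^*=t\}}|\nabla u^*|\,d\mathcal{H}^{n-1}=\mathcal{H}^{n-1}(\{u^*=t\})^2/(-\mu'(t))$. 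Chaining these estimates and integrating in $t$ gives $\int|\nabla u^*|^2\le\int|\nabla u|^2$.

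The only genuine obstacle is making the coarea/distribution-function manipulation rigorous: one must discard the critical values of $u$ (a null set, by Sard, contributing nothing to either side), deal with the values $t$ at which $\mu'(t)$ fails to exist or vanishes, and justify the density step, which rests on the $L^p$-continuity of the rearrangement map. All of this is standard, so I would cite Lieb--Loss for these points rather than reproduce the arguments in full.
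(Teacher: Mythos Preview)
Your sketch is correct and follows the classical arguments: the layer-cake formula plus equimeasurability for the $L^p$ identity, the Hardy--Littlewood double layer-cake (together with the nesting of concentric balls) for the product inequality, and the coarea/Cauchy--Schwarz/isoperimetric chain for the P\'olya--Szeg\H{o} inequality, with the density argument to pass from smooth compactly supported functions to $H^1$.

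The paper, however, does not prove this lemma at all: it merely states it and cites Lieb--Loss as the source, treating the result as a black box. So there is no proof in the paper to compare yours against; you have in fact supplied more than the authors did. Your closing remark that you ``would cite Lieb--Loss for these points rather than reproduce the arguments in full'' is exactly what the paper does for the entire lemma.
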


After these preparations, we now state and prove the main result in
this section.

\begin{Thm}\label{exi}
Assume (\ref{sect1}). Then the ground states of (\ref{thm}) exist
and are positive solutions of (\ref{thm}).
\end{Thm}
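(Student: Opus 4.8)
The plan is to establish Theorem~\ref{exi} by a concentration-compactness / direct method argument combined with Schwartz symmetrization. First I would verify that the Nehari manifold $\mathcal{N}$ is nonempty and that $c>0$. Nonemptiness follows because $\mu_1+\beta>0$ and $\mu_2+\beta>0$: taking $u_1=u_2=t\omega$ (or more generally $u_i=t\varphi$ for a fixed nonzero $\varphi$) and using that the cubic-type nonlinear term has a positive net coefficient, one can solve for a scaling factor $t>0$ putting the pair on $\mathcal{N}$. To see $c>0$, note that on $\mathcal{N}$ one has $\|{\bf u}\|_H^2 = \int(\mu_1 u_1^{2p+2}+2\beta|u_1|^{p+1}|u_2|^{p+1}+\mu_2 u_2^{2p+2}) \le C\|{\bf u}\|_H^{2p+2}$ by Sobolev embedding $H^1 \hookrightarrow L^{2p+2}$ (here the sign conditions keep the right side controlled by $\|u_1\|_{2p+2}^{2p+2}+\|u_2\|_{2p+2}^{2p+2}$ up to the factor $\mu+\beta$), so $\|{\bf u}\|_H$ is bounded below by a positive constant, and then $\mathcal{E}({\bf u}) = (\frac12 - \frac{1}{2p+2})\|{\bf u}\|_H^2 \ge c_0 > 0$ on $\mathcal{N}$.

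Next I would take a minimizing sequence $\{{\bf u}^{(k)}\}=\{(u_1^{(k)},u_2^{(k)})\}\subset\mathcal{N}$ with $\mathcal{E}({\bf u}^{(k)})\to c$ and replace each component by the pair of Schwartz symmetrizations $(|u_1^{(k)}|^*,|u_2^{(k)}|^*)$. By Lemma~\ref{lem}, this operation does not increase the gradient terms, preserves the $L^2$ and $L^{2p+2}$ norms, and \emph{increases} the coupling integral $\int|u_1|^{p+1}|u_2|^{p+1}$; since $\beta>0$, symmetrization does not increase $\mathcal{E}$ on the set where $\mathcal{G}\le 0$. A small technical point: after symmetrizing, the pair may no longer lie exactly on $\mathcal{N}$, but since $\mathcal{G}$ can only decrease (the subtracted nonlinear term can only grow), one has $\mathcal{G}(({|u_1^{(k)}|^*},{|u_2^{(k)}|^*}))\le 0$, so there is a rescaling by some $t_k\in(0,1]$ bringing it back to $\mathcal{N}$ without increasing the energy. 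Thus we may assume the minimizing sequence consists of nonnegative, radially nonincreasing functions, bounded in $H$; passing to a subsequence, ${\bf u}^{(k)}\rightharpoonup {\bf u}$ weakly in $H$, with $u_1,u_2\ge 0$ radially nonincreasing.

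The crux is to upgrade weak convergence to strong convergence in $L^{2p+2}$ so that $\mathcal{E}({\bf u})\le c$ and ${\bf u}\in\mathcal{N}$. This is where radiality is essential: by the Strauss radial compactness lemma, the embedding $H^1_{\mathrm{rad}}(\mathbb{R}^n)\hookrightarrow L^{2p+2}(\mathbb{R}^n)$ is compact for $0<p<2/(n-2)^+$ when $n\ge 2$ (for $n=1$ one uses the analogous compactness for radially nonincreasing functions, or handles it directly), so $u_i^{(k)}\to u_i$ strongly in $L^{2p+2}$, and by Hölder $|u_1^{(k)}|^{p+1}|u_2^{(k)}|^{p+1}\to |u_1|^{p+1}|u_2|^{p+1}$ in $L^1$. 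Combined with $\|{\bf u}^{(k)}\|_H^2$ being bounded below by a positive constant, the limit satisfies $\int(\mu_1 u_1^{2p+2}+2\beta|u_1|^{p+1}|u_2|^{p+1}+\mu_2 u_2^{2p+2}) = \lim\|{\bf u}^{(k)}\|_H^2 > 0$, so both $u_1\not\equiv 0$ and $u_2\not\equiv 0$ (if one component vanished, the sign hypothesis $\mu_i\le 0$ would force the whole right side $\le 0$, a contradiction). Hence ${\bf u}$ can be rescaled onto $\mathcal{N}$; weak lower semicontinuity of the $H$-norm gives $\mathcal{E}({\bf u})\le \liminf\mathcal{E}({\bf u}^{(k)})=c$, so in fact ${\bf u}\in\mathcal{N}$ achieves $c$ (and convergence is strong in $H$). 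By Proposition~\ref{prop}, ${\bf u}$ solves (\ref{thm}). Finally, positivity: ${\bf u}$ is nonnegative by construction, each $u_i$ is a nonnegative $H^1$-solution of $-\Delta u_i + u_i = f_i \ge 0$ with $f_i\in L^q_{\mathrm{loc}}$, so elliptic regularity makes it continuous, and the strong maximum principle (or the Harnack inequality) forces $u_i>0$ everywhere, since $u_i\not\equiv 0$. The main obstacle is the compactness step — ruling out mass escaping to infinity — which is precisely why the symmetrization is carried out first, converting the problem into one over the radial subspace where Strauss's lemma applies; the sign conditions $\mu_i\le0$, $\mu_i+\beta>0$ are what guarantee the symmetrized energy is still bounded and that neither component of the limit degenerates.
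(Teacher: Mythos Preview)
Your proof is correct and follows essentially the same strategy as the paper's: Schwartz symmetrization together with Strauss radial compactness to produce a nonnegative minimizer with both components nontrivial, Proposition~\ref{prop} to identify it as a solution, and the strong maximum principle for strict positivity. The only minor technical variation is that the paper minimizes over the relaxed set $\overline{\mathcal{N}}=\{\mathcal{G}\le 0\}$ and then argues by contradiction that the minimizer actually lies on $\mathcal{N}$, whereas you project back onto $\mathcal{N}$ via the scaling $t_k\in(0,1]$ after each symmetrization; these are interchangeable standard devices.
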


\begin{proof}
With the help of Proposition \ref{prop}, we see it remains only to
verify that the minimum of $c$ in (\ref{minimal}) can be attained by
a pair of positive functions in $\mathcal{N}$. Define
\begin{align*}
\overline{\mathcal{N}}:=\{&{\bf u}\in H, u_1\not\equiv0,
u_2\not\equiv0;\ \ \ \int_{\mathbb{R}^n}\left(|\nabla
u_1|^2+u_1^2+|\nabla
u_2|^2+u_2^2\right)\\
&\leq\int_{\mathbb{R}^n}\left(\mu_1u_1^{2p+2}+
2\beta|u_1|^{p+1}|u_2|^{p+1}+\mu_2u_2^{2p+2}\right)\},
\end{align*}
and
$$
\overline{c}:=\inf_{{\bf u}\in
\overline{\mathcal{N}}}\mathcal{E}({\bf u}).
$$
The conditions $\mu_1+\beta>0$, $\mu_2+\beta>0$ ensure that
$\mathcal{N}\neq\emptyset$, $\overline{\mathcal{N}}\neq\emptyset$.
It's obviously that $\overline{c}\leq c$.

Step 1. Noting that
$$
\mathcal{E}({\bf
u})\geq\frac{p}{2p+2}\int_{\mathbb{R}^n}\left(|\nabla
u_1|^2+u_1^2+|\nabla u_2|^2+u_2^2\right)>0, \ \ \ \forall \ {\bf
u}\in \overline{\mathcal{N}},
$$
the definition of $\overline{c}$ makes sense. Since
$\mathcal{E}(u_1,u_2)=\mathcal{E}(|u_1|,|u_2|)$, we can take a
nonnegative minimizing sequence $\{{\bf u}_k\}$ of $\overline{c}$.
We use $C$ to denote various constants independent of ${\bf u_k}$.
By Sobolev embedding and $\mu_1,\mu_2\leq 0$, it follows that for
all ${\bf u}_k\in\overline{\mathcal{N}}$ that
\begin{align*}
&\quad\|u_{k,1}\|_{2p+2}\|u_{k,2}\|_{2p+2}\\
&\leq \frac{1}{2}(\|u_{k,1}\|^2_{2p+2}+\|u_{k,2}\|^2_{2p+2})\\
&\leq C\int_{\mathbb{R}^n}\left(|\nabla u_{k,1}|^2+u_{k,1}^2+|\nabla
u_{k,2}|^2+u_{k,2}^2\right)\\
&\leq C\int_{\mathbb{R}^n}\left(\mu_1u_{k,1}^{2p+2}+
2\beta|u_{k,1}|^{p+1}|u_{k,2}|^{p+1}+\mu_2u_{k,2}^{2p+2}\right)\\
&\leq C\beta\|u_{k,1}\|^{p+1}_{2p+2}\|u_{k,2}\|_{2p+2}^{p+1},
\end{align*}
which implies that $\|u_{k,1}\|_{2p+2}\|u_{k,2}\|^{2p+2}\geq C>0$.
Let ${\bf u}_k^*=(u_{k,1}^*, u_{k,2}^*)$ be the Schwartz
symmetrization of ${\bf u}_k$. By lemma \ref{lem} one checks easily
that
\begin{align}
{\bf u}_k^*\in \overline{\mathcal{N}},\ \ \
\|u_{k,1}^*\|_{2p+2}\|u_{k,2}^*\|_{2p+2}\geq C>0,\label{nonzero}
\end{align}
and ${\bf u}_k^*$ is also a minimizing sequence of $\overline{c}$.
By the well-known compact embedding from radial symmetric functions
in $H^1(\mathbb{R}^n)$ to $L^{2p+2}(\mathbb{R}^n)$
\cite{Weinstein1}, one can assume that ${\bf u}^*_k\rightarrow {\bf
u}^*$ in $L^{2p+2}(\mathbb{R}^n)\times L^{2p+2}(\mathbb{R}^n)$. By
Fatou's lemma, ${\bf u}^*\in\overline{\mathcal{N}}$, and
$$
\overline{c}=\mathcal{E}({\bf u}^*).
$$
Moreover, from (\ref{nonzero}), we deduce that $u^*_{1}\not\equiv 0,
u^*_2\not\equiv0$.

Step 2. We claim that
\begin{align*}
&\quad\int_{\mathbb{R}^n}\left(|\nabla u_1^*|^2+u_1^{*2}+|\nabla
u_2^*|^2+u_2^{*2}\right)\\
&=\int_{\mathbb{R}^n}\left(\mu_1u_1^{*2p+2}+
2\beta|u_1^*|^{p+1}|u_2^*|^{p+1}+\mu_2u_2^{*2p+2}\right).
\end{align*}
Suppose not, we have
\begin{align*}
&\quad\int_{\mathbb{R}^n}\left(|\nabla u_1^*|^2+u_1^{*2}+|\nabla
u_2^*|^2+u_2^{*2}\right)\\
&<\int_{\mathbb{R}^n}\left(\mu_1u_1^{*2p+2}+
2\beta|u_1^*|^{p+1}|u_2^*|^{p+1}+\mu_2u_2^{*2p+2}\right).
\end{align*}
Then ${\bf u}^*$ belongs to the interior of
$\overline{\mathcal{N}}$, that is, ${\bf u}^*$ is an interior
critical point of $\mathcal{E}({\bf u})$, and this leads to
$$
\nabla\mathcal{E}({\bf u}^*)=0,
$$
which implies that ${\bf u}^*$ is a weak solution of (\ref{thm}).
Multiplying (\ref{thm}) by ${\bf u}^*$ and integrating over
$\mathbb{R}^n$ by parts, we have
\begin{align*}
&\quad\int_{\mathbb{R}^n}\left(|\nabla u_1^*|^2+u_1^{*2}+|\nabla
u_2^*|^2+u_2^{*2}\right)\\
&=\int_{\mathbb{R}^n}\left(\mu_1u_1^{*2p+2}+
2\beta|u_1^*|^{p+1}|u_2^*|^{p+1}+\mu_2u_2^{*2p+2}\right),
\end{align*}
which is a contradiction.

Step 3. From Step 1 and Step 2, we have that
$$
c=\overline{c}=\mathcal{E}({\bf u}^*).
$$
By Proposition \ref{prop}, ${\bf u}^*$ is a nonnegative solution of
(\ref{thm}) such that $u^*_{1}\not\equiv 0, u^*_2\not\equiv0$. The
maximum principle applied to each single equation in (\ref{thm})
suggests that $u^*_{1}>0, u^*_2>0$ and the proof of the existence of
ground states of (\ref{thm}) is finished.

Step 4. We assert that all the ground states must be positive
solutions of (\ref{thm}). In fact, Proposition \ref{prop}
demonstrates that all the ground states are nonnegative solutions of
(\ref{thm}) and each component of the solutions is nonzero. By the
strong maximum principle, these solutions must be strictly positive.
The proof of Theorem \ref{exi} is complete.
\end{proof}

\section{Uniqueness of ground states}

We are now in position to prove the uniqueness of the ground states
in the case
\begin{align*}
\mu_1=\mu_2=\mu\leq 0,\ \ \ \textrm{and}\ \ \ \mu+\beta>0.
\end{align*}
The positive weak solutions of (\ref{thm}) in $H$ when $\beta>0$
were proved to be regular enough, be radial symmetric up to
translations, and decay to zero exponentially as $|x|\rightarrow
+\infty$ in \cite{Busca,Ma}. If we denote
$$
u_1(x)=u_1(|x|)=u_1(r),\ \ \ u_2(x)=u_2(|x|)=u_2(r),
$$
we are then led to the following ODE system:
\begin{align}
\label{ODE}\left\{\begin{array}{ll}
-(r^{n-1}u_1')'+r^{n-1}u_1=\mu r^{n-1}u_1^{2p+1}+\beta r^{n-1}u_1^pu_2^{p+1},\\
-(r^{n-1}u_2')'+r^{n-1}u_2=\mu r^{n-1}u_2^{2p+1}+\beta r^{n-1}u_2^pu_1^{p+1}.\\
\end{array}
\right.
\end{align}
By the radial symmetry again we have that
$$
u_1'(0)=u_2'(0)=0.
$$
Integrating (\ref{ODE}) from $0$ to $r$ we have
\begin{align}
\label{int1}\left\{\begin{array}{ll}
u_1'(r)=r^{1-n}\int_0^r t^{n-1}u_1-\mu r^{1-n}\int_0^rt^{n-1}u_1^{2p+1}-\beta r^{1-n}\int_0^r t^{n-1}u_1^pu_2^{p+1},\\
u_2'(r)=r^{1-n}\int_0^r t^{n-1}u_2-\mu r^{1-n}\int_0^rt^{n-1}u_2^{2p+1}-\beta r^{1-n}\int_0^r t^{n-1}u_2^pu_1^{p+1}.\\
\end{array}
\right.
\end{align}
Integrating once again from $0$ to $r$ we achieve
\begin{align}
\label{int2}\left\{\begin{array}{ll}
u_1(r)&=u_1(0)+\int_0^rt^{1-n}\int_0^t s^{n-1}u_1(s)-\mu\int_0^rt^{1-n}\int_0^ts^{n-1}u_1^{2p+1}(s)\\
&\quad-\beta\int_0^r t^{1-n}\int_0^ts^{n-1}u_1^pu_2^{p+1}(s),\\
u_2(r)&=u_2(0)+\int_0^rt^{1-n}\int_0^t s^{n-1}u_2(s)-\mu\int_0^rt^{1-n}\int_0^ts^{n-1}u_2^{2p+1}(s)\\
&\quad-\beta\int_0^r t^{1-n}\int_0^ts^{n-1}u_2^pu_1^{p+1}(s).\\
\end{array}
\right.
\end{align}

We claim that $u_1(0)=u_2(0)$. If else, suppose that $u_1(0)>u_2(0)$
for example, and define
$$
R_0:=\sup_{R>0}\{R;\ \  \forall\ r\in(0,R),\  \  u_1(r)>u_2(r)\}.
$$
We indicate that $R_0=+\infty$. Otherwise, by continuity we have
\begin{align}
u_1(R_0)=u_2(R_0).\label{R}
\end{align}
However, from (\ref{int2}), and the facts that for all $s\in
(0,R_0)$
\begin{align*}
\left\{\begin{array}{ll} u_1(s)-u_2(s)>0,\\
-\mu\left(u_1^{2p+1}(s)-u_2^{2p+1}(s)\right)>0,\\
-\beta\left(u_1^pu_2^{p+1}(s)-u_2^pu_1^{p+1}(s)\right)>0,
\end{array}
\right.
\end{align*}
we have
\begin{align*}
u_1(R_0)-u_2(R_0)&=(u_1(0)-u_2(0))+\int_0^{R_0}t^{1-n}\int_0^t
s^{n-1}(u_1(s)-u_2(s))\\
&\quad-\mu\int_0^{R_0}t^{1-n}\int_0^ts^{n-1}(u_1^{2p+1}(s)-u_2^{2p+1}(s))\\
&\quad-\beta\int_0^{R_0}
t^{1-n}\int_0^ts^{n-1}(u_1^pu_2^{p+1}(s)-u_2^pu_1^{p+1}(s))>0,
\end{align*}
which is a contradiction with (\ref{R}). A further fact about $u_1$
and $u_2$ is that $(u_1-u_2)(r)$ is nondecreasing as $r$ goes into
infinity. Indeed, from (\ref{int1}) we have
\begin{align*}
u_1'(r)-u_2'(r)&=r^{1-n}\int_0^r t^{n-1}(u_1-u_2)-\mu
r^{1-n}\int_0^rt^{n-1}(u_1^{2p+1}-u_2^{2p+1})\\
&\quad-\beta r^{1-n}\int_0^r t^{n-1}(u_1^pu_2^{p+1}-u_2^pu_1^{p+1})>
0,
\end{align*}
where the inequality follows from $u_1>u_2$. Thus we have
$$
\liminf_{r\rightarrow+\infty}(u_1-u_2)(r)\geq u_1(0)-u_2(0)>0,
$$
which contradicts with the fact that $u_1,u_2\rightarrow 0$ as
$r\rightarrow +\infty$. Similarly, one can show that $u_1(0)<u_2(0)$
is also impossible.

Now we have $u_1(0)=u_2(0)$ and $u_1'(0)=u_2'(0)=0$. We deduce from
the standard uniqueness theory of the Cauchy problem of the ODE
system that
$$
u_1=u_2=u,
$$
where $u$ is the positive solution of
$$
\Delta u-u+(\mu+\beta)u^{2p+1}=0.
$$
Since the above equation has only one positive solution \cite{Kwong}
up to translations given by
$$
u=(\mu+\beta)^{-1/2p}\omega,
$$
we arrive at the uniqueness of positive solutions of (\ref{thm}),
and the proof of Theorem \ref{main} is finished.

\section{Sharp vector-valued Gagliardo-Nirenberg inequality}

In this section, we derive the sharp vector-valued
Gagliardo-Nirenberg inequality (Corollary \ref{GN}) as an
application of our uniqueness result of ground states and the method
of \cite{Weinstein1} (see also \cite{CW}). We define the following
manifold
$$
\mathcal{M}:=\{u_1,u_2\in H^1(\mathbb{R}^n);\ \ \
\mu\|u_1\|_{2p+2}^{2p+2}+2\beta\|u_1u_2\|_{p+1}^{p+1}+\mu\|u_2\|_{2p+2}^{2p+2}>0
\},
$$
and consider the minimization problem
\begin{align*}
\alpha:=\inf_{{\bf u}\in\mathcal{M}}\mathcal{J}({\bf u}),
\end{align*}
where
\begin{align*}
\mathcal{J}({\bf
u})=\frac{(\|u_1\|_2^2+\|u_2\|_2^2)^{p+1-np/2}(\|\nabla
u_1\|_2^2+\|\nabla
u_2\|_2^2)^{np/2}}{\mu\|u_1\|_{2p+2}^{2p+2}+2\beta\|u_1u_2\|_{p+1}^{p+1}+\mu\|u_2\|_{2p+2}^{2p+2}}.
\end{align*}
It's obvious that the sharp constant in the vector-valued
Gagliardo-Nirenberg inequality (\ref{GNi}) is
$$
\mathcal{K}_{n,p,\mu,\beta}=\frac{1}{\alpha}.
$$

Applying the same method exactly as in \cite{Weinstein1}, we assert
that the minimum of $\alpha$ can be achieved by a pair of positive
solutions $u_1^*, u_2^*$ of
\begin{align}
\label{GN1}\left\{\begin{array}{ll} u^*_1-\Delta u^*_1=
\mu{u_1^*}^{2p+1}+\beta {u_2^*}^{p+1}{u_1^*}^{p},\\
u^*_2-\Delta u^*_2= \mu{u_2^*}^{2p+1}+\beta
{u_1^*}^{p+1}{u_2^*}^{p},
\end{array}
\right.
\end{align}
Multiplying (\ref{GN1}) by ${\bf u}^*$ and integrating by parts over
$\mathbb{R}^n$, we have
\begin{align*}
\|\nabla
u_j^*\|_2^2+\|u_j^*\|_2^2=\mu\|u_j^*\|_{2p+2}^{2p+2}+\beta\|u_1^*u_2^*\|^{p+1}_{p+1},
\end{align*}
which yields
\begin{align}
\sum_{j=1}^2\|\nabla
u_j^*\|_2^2+\sum_{j=1}^2\|u_j^*\|_2^2=\mu\sum_{j=1}^2\|u_j^*\|_{2p+2}^{2p+2}+2\beta\|u_1^*u_2^*\|^{p+1}_{p+1}.\label{1}
\end{align}
Moreover, the Pohozaev identity for (\ref{GN1}) reads
\begin{align}
&\quad\frac{n-2}{2}\sum_{j=1}^2\|\nabla
u_j^*\|_2^2+\frac{n}{2}\sum_{j=1}^2\|u_j^*\|_2^2\label{2}\\
&=\frac{n}{2p+2}\left(\mu\sum_{j=1}^2\|u_j^*\|^{2p+2}_{2p+2}+2\beta\|u_1^*u_2^*\|^{p+1}_{p+1}\right).\nonumber
\end{align}
From (\ref{1}) and (\ref{2}), we get that
\begin{align*}
\left\{\begin{array}{ll}
\left(\mu\sum_{j=1}^2\|u_j^*\|^{2p+2}_{2p+2}+2\beta\|u_1^*u_2^*\|^{p+1}_{p+1}\right)
=\frac{2p+2}{2p+2-np}\sum_{j=1}^
2\|u_j^*\|_2^2,\\
\sum_{j=1}^2\|\nabla u_j^*\|_2^2=\frac{np}{2p+2-np}
\sum_{j=1}^2\|u_j^*\|_2^2,
\end{array}
\right.
\end{align*}
which gives
$$
\mathcal{J}({\bf
u^*})=\frac{(np)^{np/2}(2p+2-np)^{1-np/2}}{2(p+1)}\left(\sum_{j=1}^2\|u_j^*\|_2^2\right)^p.
$$

Since we have already known by Theorem \ref{main} that the positive
solution of (\ref{GN1}) is uniquely determined by
$$
u^*_1=u^*_2=\left(\mu+\beta\right)^{-1/2p}\omega,
$$
we arrive at
\begin{align*}
\mathcal{J}({\bf
u^*})=\frac{(np)^{np/2}(2p+2-np)^{1-np/2}}{2(p+1)}\left(\frac{2}{(\mu+\beta)^{1/p}}\|\omega\|_2^2\right)^p.
\end{align*}
And therefore
\begin{align*}
\mathcal{K}_{n,p,\mu,\beta}=\frac{2(p+1)}{(np)^{np/2}(2p+2-np)^{1-np/2}\|\omega\|_2^{2p}}\cdot
\frac{(\mu+\beta)}{2^p}=\frac{(\mu+\beta)}{2^p}\mathcal{K}_{n,p},
\end{align*}
where the fact \cite{Weinstein1} that
$$
\mathcal{K}_{n,p}=\frac{2(p+1)}{(np)^{np/2}(2p+2-np)^{1-np/2}\|\omega\|_2^{2p}}
$$
is used, and this completes the proof of Corollary \ref{GN}.

\

{\bf Acknowledgement:} The first named author would like to thank
Prof. Congming Li for helpful discussions about uniqueness results
during his visit to Tsinghua University in July, 2007.

\end{document}